\documentclass{article}

\usepackage[english]{babel}

\usepackage{amsmath,amssymb,setspace}
\usepackage{amsthm}
\usepackage[english]{babel}
\usepackage{amsfonts}
\usepackage{calligra}
\usepackage[T1]{fontenc}
\usepackage{xstring}
\usepackage{graphics}
\usepackage[numbers,sort&compress]{natbib}
\usepackage{multicol}
\date{}
\usepackage{longtable}
\usepackage{float}
\usepackage[margin=0.7 in]{geometry}
\usepackage{graphicx}
\usepackage{mathtools}
\usepackage{amsmath}
\usepackage{subcaption}
\usepackage{amsmath, amssymb, setspace}
%\usepackage{lineno,hyperref}
%\usepackage[nottoc]{tocbibind}
%\modulolinenumbers[5]

\usepackage{amsthm}
\usepackage[english]{babel}
\usepackage{amsfonts}
\usepackage{calligra}
\usepackage[T1]{fontenc}
\usepackage{xstring}

\usepackage{amssymb}
\usepackage{amsthm}
\usepackage{amsmath}
\usepackage{lipsum}
\newtheorem{definition}{Definition}
\theoremstyle{plain}
\theoremstyle{definition}
\theoremstyle{remark}
\newtheorem{theorem}{Theorem}

\newtheorem{example}{Example}

\title{On Homogeneous System of Fractal Differential Equations }
\author{Alireza Khalili Golmankhaneh$^1$, Donatella Bongiorno$^2$,\\
$^1$ Department of Physics, Urmia Branch,\\ Islamic Azad University, Urmia 63896,West Azerbaijan,  Iran\\
alirezakhalili2002@yahoo.co.in\\
$^2$ Department of Engineering, University of Palermo,
Palermo 90100, Italy\\
donatella.bongiorno@unipa.it\\
}

\begin{document}

\maketitle

\let\thefootnote\relax
\footnotetext{ MSC2020:28A80, 34A38} %%%%%%%%%%
\footnote{Corresponding author Alireza Khalili Golmankhaneh}
\begin{abstract}
In this paper, a homogeneous system of n $\alpha$-order linear fractal differential equation is defined and the set of its fundamental solutions through the corresponding Wronskian matrix is described.  Finally,  the solutions of some assigned autonomous homogeneous systems are plotted to show the details previously proved.
\end{abstract} %%%%%%%%%
%\footnotetext{MSC2020: 28A80, 33E30, 34A12, 34M03.} %%%%%%%%%%

\section{Introduction}
Fractals are geometric shapes characterized by having dimensions that are larger and non-integer compared to their topological dimensions\cite{falconer1999techniques,jorgensen2006analysis}.

Lightning does not follow a linear path, clouds lack perfect spherical shapes, and mountains do not conform to cone-like structures. The complexity of natural shapes differs fundamentally from those in traditional geometry. Specifically, fractal geometry captures the intricate nature of these shapes, representing a qualitative difference rather than just a matter of degree \cite{Mandelbro}.

 The research was conducted to examine the process of identifying and analyzing fractal patterns, particularly those emerging from social processes \cite{brown2010fractal}. Several techniques for quantifying the spatial and temporal attributes of fractal patterns were presented \cite{moreira2021fractal}. It covers techniques like box-counting, which can determine fractal dimensions in diverse contexts such as Saturn's rings, forest landscapes, brain imaging data, and text analysis \cite{moreira2021fractal}. The analytical applications of IFS methods, demonstrating their direct uses as well as novel analytical approaches inspired by the IFS fractal framework, were showcased \cite{kunze2011fractal}.

The systematic analysis, modeling, and synthesis of integrated fractal and fractal-rate point processes are explored, presenting a rigorous yet practical approach \cite{lowen2005fractal}. These processes amalgamate the scaling properties of fractals with the discrete nature of random point processes \cite{lowen2005fractal}. The comprehensive understanding of polymer synthesis and curing, by establishing structural and physical foundations through fractal analysis, was thoroughly investigated \cite{kozlov2013fractal}. Theoretical concepts and practical applications of fractals and multifractals, as discussed in \cite{ghanbarian2017fractals},  cater to various scientific communities, including those involved in petroleum, chemical, civil and environmental engineering, atmospheric research, and hydrology \cite{feder2013fractals}. Fractal analysis has been utilized in various fields, including the identification of coding regions in DNA and the assessment of space-filling properties in tumors, blood vessels, and neurons. Additionally, fractal concepts have been effectively integrated into models of biological phenomena, such as epithelial cell growth, blood vessel development, periodontal disease progression, and viral infections \cite{cross1997fractals}. Ecologists have harnessed the power of fractals to tackle fundamental questions pertaining to scale, measurement, and hierarchy within ecological systems \cite{sugihara1990applications}. Self-similar and self-affine processes are ubiquitous in nature, manifesting across various scales and contexts, including galaxies, landscapes, earthquakes, geological formations, aggregates, colloids, rough surfaces, interfaces, glassy materials, polymers, proteins, and other large molecules \cite{bunde2013fractals}. The explanations covered transformations in metric spaces, dynamics within fractals, fractal dimension, interpolation techniques on fractals, Julia sets, parameter spaces, and measures associated with fractals \cite{Barnsley2014p,attia2021note}. Fractal dimensions were initially introduced using random walks and Sierpinski gaskets. The exploration culminated with diffusion-limited aggregates, closely associated with critical exponents \cite{stauffer2017fractals}. Self-similarity and scale invariance, concepts rooted in both phase transitions and fractal geometry, intersect in numerous domains of physics. The discussion extends to advanced theoretical subjects such as chaos theory, percolation phenomena, turbulence models, and renormalization group methods \cite{pietronero2012fractals,takayasu1990fractals}. Constitutive equations for fractal elasticity are provided, establishing a linear elastic relationship based on the definition of fractal elastic potential. The physical dimensions of the second derivatives of the elastic potential are determined by the fractal dimensions of both stress and strain \cite{carpinteri2004fractal}. Recent physics research has introduced fractal time, characterized by self-similar properties and fractional dimension \cite{Welch-5,Vrobel}.\\
The significance of fractals has spurred researchers to analyze them using a myriad of methods, including fractional calculus \cite{Trifcebook}, harmonic analysis \cite{kigami2001analysis}, measure theory \cite{giona1995fractal, freiberg2002harmonic,bongiorno2018derivatives, jiang1998some, bongiorno2015fundamental}, nonstandard methodologies \cite{teh2019resolution,nottale2011scale}, probabilistic approaches \cite{Barlow} and fractional space \cite{stillinger1977axiomatic}.
The analysis was conducted on an integral of the s-Riemann type, where the gauge is a positive constant but the points involved in the s-Riemann sums were not randomly chosen \cite{BDonattalerr}.
 The investigation into numerical methods for solving partial differential equations on fractals was conducted \cite{contreras2022finite}. This study delves into both strong and weak forms of the equations, utilizing standard graph Laplacian matrices and discrete approximations of fractal sets.\\
While the Riemann-like method remains significant as an extension of classical calculus, fractal calculus offers a mathematical framework tailored for handling equations with solutions displaying fractal characteristics, including fractal sets and curves \cite{parvate2009calculus, parvate2011calculus}. Fractal calculus stands out for its elegant and algorithmic approaches, which are deemed superior to other methods, rendering it particularly attractive \cite{Alireza-book}.\\ The fractal local Mellin transform and fractal non-local transform have been employed as tools for solving fractal differential equations \cite{KhaliliGolmankhanehWelchSerpaJørgensen+2023}. The concept of Laplace transform and local Fourier transform has been extended to fractal curves, facilitating the solution of fractal differential equations with constant coefficients \cite{khalili2023fractalLapace,Fourier1ttttttt}. This study investigates fractal time within the framework of economic models, employing both local and non-local fractal Caputo derivatives \cite{khalili2021economic}. The gauge integral approach has been successfully employed in the generalization of $F^{\alpha}$-calculus (FC). This generalization focuses on integrating functions over a specific subset \cite{golmankhaneh2016fractal,golmankhaneh2023fuzzification} of the real number line, encompassing singularities that arise within fractal sets. The concept establishes a connection between the order of non-local fractal derivatives and the fractal Hurst exponent \cite{golmankhaneh2021fractalBro}. Fractal differential equations have been solved, and their stability conditions have been established \cite{tuncc2020stability}. The Einstein field equations and their analogues were presented, emphasizing the relevance and practicality of fractal geometry \cite{golmankhaneh2023einstein}. Fractal integro-differential equations for different types of circuits exposed to zero-mean additive white Gaussian noise on fractal sets have been formulated, integrating a fractal time component \cite{banchuin2022noise,banchuin20224noise,Rewid3}. The suggestion was made that two distinct del-operators, each acting on a vector field and a scalar field, could be defined in spaces with non-integer dimensions. These del-operators were then utilized to derive the conventional formulation of the Laplacian and the fundamental vector differential operators in fractional-dimensional space \cite{balankin2023vector}. Fractal functional differential equations were formulated as a framework that offers a mathematical model for phenomena characterized by fractal time and fractal structure \cite{golmankhaneh2023initial}. The study demonstrated that non-local fractal derivatives can effectively characterize fractional Brownian motion on thin Cantor-like sets. Additionally, it introduced a staircase function associated with a fractal comb, which was utilized to define derivatives and integrals for functions defined on these combs \cite{golmankhaneh2023fractalrede}. Fractal integral and differential forms were defined utilizing nonstandard analysis \cite{khalili2023non}. Diffusion and fractional Brownian motion in fractal-structured media were categorized, and fractal stochastic differential equations were formulated \cite{golmankhaneh2021equilibrium,khalili2019fractalcat,khalili2019random}. The study \cite{golmankhaneh2023solving} investigated the analogues of the separable method and integrating factor methodology for solving $\alpha$-order differential equations. Inspiration was drawn from Newton, Lagrange, Hamilton, and Appell for the proposed fractal analogue of mechanics. Furthermore, to establish the Langevin equation on fractal curves, fractal velocity and acceleration were defined \cite{golmankhaneh2023classical}. The conventional Fokker-Planck Equation and its fractal version, incorporating fractal derivatives, were linked to each other in the study \cite{megias2023dynamics}. The study explored various mathematical models for the growth of double-sized cancer in the fractal temporal dimension \cite{golmankhaneh2024modeling}. Additionally, a fractal discharging model for batteries was developed to investigate the impact of non-locality on solution behavior and how the system's previous state influences its current state \cite{AliGolmankhanehYilmazer}.
\\
In this investigation, we analyze systems of fractal differential equations and explore their solutions. \\
The paper is structured as follows:\\
In Section \ref{1g}, we offer a concise overview of fractal calculus. Section \ref{2g} delves into the study of systems of $\alpha$-order linear fractal differential equations, accompanied by the proof of related theorems. In Section \ref{3g}, we focus on solving homogeneous fractal linear systems with constant coefficients. Finally, Section \ref{4g} presents the conclusion.
\section{Preliminaries \label{1g}}
In this section,  a concise overview  of fractal calculus in the context of fractal sets (see \cite{parvate2009calculus,parvate2011calculus,Alireza-book}) and the definition of fractal matrix with its fractal derivative and its fractal integral are given.
\begin{definition}
A flag function of a set $F \subset \mathbb{R}$ and a closed interval $I$ is defied as follows:
\begin{equation}
  \rho(F,I)=
  \begin{cases}
    1, & \text{if } F\cap I\neq\emptyset;\\
    0, & \text{otherwise}.
  \end{cases}
\end{equation}
\end{definition}

\begin{definition}
Given $\delta > 0$, a constant, and given $P_{[a,b]}$ a subdivision of $[a,b]$, the coarse-grained mass of a fractal set $F \subset \mathbb{R} $ is defined as follows:

\begin{equation}
  \gamma_{\delta}^{\alpha}(F,a,b)=\inf_{|P|\leq
\delta}\sum_{i=0}^{n-1}\Gamma(\alpha+1)(t_{i+1}-t_{i})^{\alpha}
\rho(F,[t_{i},t_{i+1}]),
\end{equation}
where $|P|=\max_{0\leq i\leq n-1}(t_{i+1}-t_{i})$, and $0< \alpha\leq1$.
\end{definition}

\begin{definition}
 The mass function of a fractal set $F \subset \mathbb{R}$ is the limit as $\delta$ tends to zero of the coarse-grained mass:
\begin{equation}
  \gamma^{\alpha}(F,a,b)=\lim_{\delta\rightarrow0}\gamma_{\delta}^{\alpha}(F,a,b).
\end{equation}
\end{definition}

\begin{definition}
The $\gamma$-dimension of the intersection between $F \subset \mathbb{R}$ and  the interval $[a, b]$ is defined as follows:
\begin{align}
  \dim_{\gamma}(F\cap
[a,b])&=\inf\{\alpha:\gamma^{\alpha}(F,a,b)=0\}\nonumber\\&
=\sup\{\alpha:\gamma^{\alpha}(F,a,b)=\infty\}
\end{align}
\end{definition}

\begin{definition}
For a fractal set $F \subset \mathbb{R}$, the integral staircase function of order $\alpha$ is defined as follows:
\begin{equation}
 S_{F}^{\alpha}(x)=
 \begin{cases}
   \gamma^{\alpha}(F,a_{0},x), & \text{if } x\geq a_{0}; \\
   - \gamma^{\alpha}(F,x,a_{0}), & \text{otherwise}.
 \end{cases}
\end{equation}
where $a_{0}$ is an arbitrary fixed real number.
\end{definition}
\begin{definition}
 Let $F$ be a fractal subset of the real line. A function $f: F \rightarrow \mathbb{R}$ is said to be $F$-continuous at $x \in F$ if
\begin{equation}
f(x) = \underset{ y\rightarrow
x}{F_{-}\text{lim}}f(y).
\end{equation}
whenever the $F_{-}\text{lim}$ exists.
\end{definition}

\begin{definition}
Let $f$ be a function defined on a $\alpha$-perfect fractal set $F \subset \mathbb{R}$, and let  $x$ be a point of $F \subset \mathbb{R}$. The $F^{\alpha}$-derivative of $f$ at the point $x$ is:
\begin{equation}
  D_{F}^{\alpha}f(x)=
  \begin{cases}
    \underset{ y\rightarrow
x}{F_{-}\text{lim}}~\frac{f(y)-f(x)}{S_{F}^{\alpha}(y)-S_{F}^{\alpha}(x)}, & \text{if } x\in F; \\
    0, & \text{otherwise}.
  \end{cases}
\end{equation}
if the fractal limit exists \cite{parvate2009calculus}.
\end{definition}

\begin{definition}
Let I=[a,b]. Let $F \subset \mathbb{R}$ be an $\alpha$-perfect fractal subset of $[a,b]$ and let $x\in F.$ The $F^{\alpha}$-integral of a bounded function $f$ defined on $F \subset \mathbb{R}$ where $S^{\alpha}_F(*)$ is finite on $I$, is defined as follows:
\begin{align}
  \int_{a}^{b}f(x)d_{F}^{\alpha}x&=\sup_{P_{[a,b]}}
\sum_{i=0}^{n-1}\inf_{x\in F\cap
I}f(x)(S_{F}^{\alpha}(x_{i+1})-S_{F}^{\alpha}(x_{i}))
\nonumber\\&=\inf_{P_{[a,b]}}
\sum_{i=0}^{n-1}\sup_{x\in F\cap
I}f(x)(S_{F}^{\alpha}(x_{i+1})-S_{F}^{\alpha}(x_{i})).
\end{align}
\end{definition}
\begin{definition}
Let $F \subset \mathbb{R}$ and let $a_{ij}:F\rightarrow R$ where $i=1,2,...,m$ and $j=1,2,...,n$ be fractal functions. Therefore the fractal matrix $m\times n$  is defined as follows
\begin{equation}\label{frdq2}
       \mathbf{A}(t)=\left(
         \begin{array}{ccc}
           a_{11}(t) & \cdots & a_{1n}(t)  \\
           \vdots &  & \vdots \\
           a_{m1}(t) & \cdots & a_{mn}(t) \\
         \end{array}
       \right).
\end{equation}
Moreover, whenever $n=1$ the fractal matrix will be called fractal vector and we will write:
\begin{equation}\label{olpm89}
  \mathbf{x}(t)=\left(
         \begin{array}{c}
           x_{1}(t) \\
           \vdots \\
           x_{m}(t) \\
         \end{array}
       \right)
\end{equation}
  If each element of $\mathbf{A}(t)$  is a $F$-continuous function, then the fractal matrix $\mathbf{A}(t)$ is a $F$-continuous. Moreover, if  each element of $\mathbf{A}(t)$ is a $F^{\alpha}$-differentiable function, then $\mathbf{A}(t)$ is $F^{\alpha}$-differentiable and the fractal derivative of $\mathbf{A}(t)$ is the following:
\begin{equation}\label{dsww12}
  D_{F}^{\alpha}\mathbf{A}(t)=\left(D_{F}^{\alpha}a_{ij}(t)\right).
\end{equation}
Finally, whenever $F\subset [a,b]$,  the fractal integral of a fractal matrix $\mathbf{A}(t)$, is given as follows:
\begin{equation}\label{rfeeewsaq}
 \int_{a}^{b}\mathbf{A}(t)d_{F}^{\alpha}t=
 \left(\int_{a}^{b}a_{ij}(t)d_{F}^{\alpha}t\right).
\end{equation}
\end{definition}

\section{Homogeneous System of $\alpha$-Order  Linear Fractal Differential Equation \label{2g}}
In this section, we introduce a homogeneous system of $\alpha$-order linear fractal differential equations and through the corresponding Wronskian matrix, we discuss some characteristic properties of the set of its solutions.
The general form of a homogeneous system of $n$ $\alpha$-order fractal differential linear equations is as follows:
\begin{equation}\label{ijnuhygb}
\begin{split}
  D_{F}^{\alpha}x_{1}(t)&=p_{11}(t)x_{1}(t)+...+p_{1n}(t)x_{n}(t)\\
  \vdots&\\
  D_{F}^{\alpha}x_{n}(t)&=p_{n1}(t)x_{1}(t)+...+p_{nn}(t)x_{n}(t)
  \end{split}
\end{equation}
In short form, Eq. \eqref{ijnuhygb} can be expressed as:
\begin{equation}\label{ijnbhy}
  D_{F}^{\alpha}\mathbf{x}(t)=\mathbf{P}(t)\mathbf{x}(t),~~\forall t\in F,
\end{equation}
where
\begin{equation}\label{ijko951}
  \mathbf{x}(t)=\left(
               \begin{array}{c}
                 x_{1}(t) \\
                 \vdots \\
                 x_{n}(t) \\
               \end{array}
             \right)~~~~~and~~~\mathbf{P}(t)=\left(
         \begin{array}{ccc}
           p_{11}(t) & \cdots & p_{1n}(t)  \\
           \vdots &  & \vdots \\
           p_{n1}(t) & \cdots & p_{nn}(t) \\
         \end{array}
       \right).
\end{equation}
We assume that each element of the fractal matrix  $\mathbf{P}(t)$ is a real value $F$-continuous function on $F$.  Moreover,  we will use the following notation:
\begin{equation}\label{dddswwswq}
  \mathbf{x}^{1}(t)=\left(
      \begin{array}{c}
        x_{11}(t) \\
        x_{21}(t) \\
        \vdots\\
         x_{n1}(t) \\
      \end{array}
    \right),\cdots, \mathbf{x}^{k}(t)=\left(\begin{array}{c}
        x_{1k}(t) \\
        x_{2k}(t) \\
        \vdots\\
         x_{nk}(t) \\
      \end{array}
    \right),\cdots
\end{equation}
where $x_{ij}(t)=x_{i}^{(j)}(t)$ denotes the $i^{th}$ component of the $j^{th}$ solution $\mathbf{x}^{(j)}(t)$ of Eq.\eqref{ijnbhy}. We say that the solution of a given system of $\alpha$-order linear fractal differential equation is the collection of all $F^{\alpha}$-differentiable complex vector functions defined on $F$: $\mathbf{x}^{(j)}(t)$ such that, by substituting each  $\mathbf{x}^{(j)}(t)$  for the individual $x_{ij}(t)$, the system \eqref{ijnbhy}  is satisfied. Now let us observe that by the linearity of the system, if the fractal vector functions $\mathbf{x}^{(1)}(t),~\mathbf{x}^{(2)},...,\mathbf{x}^{(n)}(t)$  are solutions of the system \eqref{ijnbhy}, then the linear combination $c_{1}\mathbf{x}^{(1)}(t)+c_{2}\mathbf{x}^{(2)}(t)+...+c_{n}\mathbf{x}^{(n)}(t) $ is also a solution for any constants $c_{1},c_{2},...,c_{n}$, so the set of all the solutions of Eq.\eqref{ijnbhy} is a vector space.\\
Now let us consider the following fractal matrix:
\begin{equation}\label{reeeeeq}
  \mathbf{X}(t)=\left(
                  \begin{array}{ccc}
                    x_{11}(t) & \cdots & x_{1n}(t) \\
                    \vdots &  & \vdots \\
                    x_{n1}(t) & \cdots & x_{nn}(t) \\
                  \end{array}
                \right)
\end{equation}
whose columns are the fractal vectors $\mathbf{x}^{(1)}(t),...,\mathbf{x}^{(n)}(t)$. The Wronskian of the solutions is defined by:
\begin{equation}\label{ijnbgt}
  W[\mathbf{x}^{(1)}(t),...,\mathbf{x}^{(n)}(t)]=\det\mathbf{X}(t),~~\forall t\in F.
\end{equation}

\begin{theorem}
If  the Wronskian of the solutions is not zero in each point of  $F$, then there exist constants $c_{1},c_{2},...,c_{n}$ such that each solution $\mathbf{x}=\mathbf{x}(t)$ of the system Eq.\eqref{ijnbhy} can be expressed as:
  \begin{equation}\label{iojku}
    \mathbf{x}(t)=c_{1}\mathbf{x}^{(1)}(t)+...+c_{n}\mathbf{x}^{(n)}(t)
  \end{equation}
  This is called the general solution of the system Eq.\eqref{ijnbhy}, and the set ${\mathbf{x}^{(1)}(t), \ldots, \mathbf{x}^{(n)}(t)}$ is denoted as the fundamental set of solutions on  $F$.
\end{theorem}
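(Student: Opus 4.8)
The plan is to mimic the classical linear ODE argument, transplanted into the $F^{\alpha}$-calculus setting, using the fact that solutions of \eqref{ijnbhy} form a vector space together with an existence–uniqueness principle for the fractal initial value problem. Fix an arbitrary point $t_{0}\in F$. Given any solution $\mathbf{x}=\mathbf{x}(t)$, consider the constant vector $\mathbf{b}=\mathbf{x}(t_{0})\in\mathbb{R}^{n}$ (or $\mathbb{C}^{n}$). Since $W[\mathbf{x}^{(1)},\dots,\mathbf{x}^{(n)}](t_{0})=\det\mathbf{X}(t_{0})\neq 0$ by hypothesis, the matrix $\mathbf{X}(t_{0})$ is invertible, so the linear algebraic system $\mathbf{X}(t_{0})\mathbf{c}=\mathbf{b}$ has a unique solution $\mathbf{c}=(c_{1},\dots,c_{n})^{\mathsf T}$. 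I would then define the candidate $\mathbf{y}(t)=c_{1}\mathbf{x}^{(1)}(t)+\dots+c_{n}\mathbf{x}^{(n)}(t)$.

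The next step is to verify that $\mathbf{y}(t)$ solves \eqref{ijnbhy}: this is immediate from the linearity observation already recorded in the text, namely that any linear combination of solutions is again a solution (using $D_{F}^{\alpha}$ linearity and \eqref{dsww12}). By construction $\mathbf{y}(t_{0})=\mathbf{X}(t_{0})\mathbf{c}=\mathbf{b}=\mathbf{x}(t_{0})$, so $\mathbf{y}$ and $\mathbf{x}$ are two solutions of the same homogeneous fractal system agreeing at $t_{0}$. Invoking uniqueness of solutions to the fractal initial value problem $D_{F}^{\alpha}\mathbf{x}(t)=\mathbf{P}(t)\mathbf{x}(t)$, $\mathbf{x}(t_{0})=\mathbf{b}$ — which follows from the $F$-continuity of the entries of $\mathbf{P}(t)$ via the fractal analogue of the Picard/Gronwall estimate, as developed for scalar fractal differential equations in the cited works — I conclude $\mathbf{x}(t)=\mathbf{y}(t)$ for all $t\in F$, which is exactly \eqref{iojku}. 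Finally I would note that the $c_{i}$ are uniquely determined (again by invertibility of $\mathbf{X}(t_{0})$), so $\{\mathbf{x}^{(1)},\dots,\mathbf{x}^{(n)}\}$ spans and is independent, justifying the terminology ``fundamental set of solutions.''

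The main obstacle is the uniqueness step: the clean classical proof relies on an existence–uniqueness theorem for the fractal system, and one must make sure such a theorem is available in the $F^{\alpha}$-calculus framework (a Lipschitz/Gronwall argument with the conjugacy $S_{F}^{\alpha}$ playing the role of the independent variable). If one prefers to avoid quoting uniqueness, an alternative is to argue directly: set $\mathbf{z}(t)=\mathbf{x}(t)-\mathbf{y}(t)$, a solution with $\mathbf{z}(t_{0})=\mathbf{0}$, and show $\mathbf{z}\equiv\mathbf{0}$ by bounding $\|\mathbf{z}(t)\|$ through the fractal integral equation $\mathbf{z}(t)=\int_{t_{0}}^{t}\mathbf{P}(s)\mathbf{z}(s)\,d_{F}^{\alpha}s$ and applying a fractal Gronwall inequality (the estimate $\|\mathbf{z}(t)\|\le \|\mathbf{z}(t_0)\|\exp(M\,|S_{F}^{\alpha}(t)-S_{F}^{\alpha}(t_{0})|)$ with $M$ a uniform bound on the entries of $\mathbf{P}$ on the relevant portion of $F$). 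A secondary, purely expository point is that one should remark the hypothesis ``$W\neq 0$ at every point of $F$'' is in fact needed only at one point $t_{0}$; the stronger statement then follows a posteriori, but since the theorem is stated with the pointwise hypothesis I would simply use it at the chosen $t_{0}$ and not belabor the equivalence.
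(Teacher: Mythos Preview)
Your proposal is correct and follows essentially the same route as the paper: fix $t_{0}\in F$, use nonvanishing of $W$ at $t_{0}$ to solve the linear system for $c_{1},\dots,c_{n}$, then conclude via uniqueness of the fractal IVP. The paper compresses your uniqueness step into a one-line appeal to the ``conjugacy of $F^{\alpha}$-calculus and ordinary calculus,'' whereas you spell it out (and even offer a Gronwall alternative); otherwise the arguments coincide.
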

\begin{proof}
  To prove this theorem, let us consider a point $t_{0} \in F$, and let $\mathbf{y}(t)=\mathbf{x}(t)$. We have to show that there exist values $c_{1}, \ldots, c_{n}$ such that:
\begin{equation}\label{iouj}
c_{1}\mathbf{x}^{(1)}(t_{0})+ \ldots +c_{n}\mathbf{x}^{(n)}(t_{0})=\mathbf{y}(t_{0})
\end{equation}
This equation can be expressed component-wise as:
\begin{equation}\label{uiit}
  \begin{split}
    c_{1}x_{11}(t_{0})+...+c_{n}x_{1n}(t_{0})&=y_{1}(t_{0})\\
    &\vdots\\
    c_{1}x_{n1}(t_{0})+...+c_{n}x_{nn}(t_{0})&=y_{n}(t_{0})
    \end{split}
  \end{equation}
By the conjugacy of $F^{\alpha}$-calculus and the ordinary calculus \cite{parvate2009calculus,parvate2011calculus,Alireza-book}), the necessary and sufficient condition for Eq.\eqref{uiit}  to have a unique solution is the non-vanishing of the determinant of the coefficients, which is the Wronskian evaluated at $t_{0}$. Therefore, there exists a unique solution of Equation \eqref{ijnbhy} in the form $\mathbf{x}(t)=c_{1}\mathbf{x}^{(1)}(t)+ \ldots +c_{n}\mathbf{x}^{(n)}(t)$. This completes the proof.
\end{proof}

\begin{theorem}
If $\mathbf{x}^{(1)}(t),\ldots,\mathbf{x}^{(n)}(t)$ are solutions of Eq.\eqref{ijnbhy} on the fractal set $F$, then at each point of $F \subset \mathbb{R}$, $W[ \mathbf{x}^{(1)}(t),\ldots,\mathbf{x}^{(n)}(t)]$ is either identically zero or never vanishes.
\end{theorem}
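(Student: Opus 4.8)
The plan is to derive a fractal analogue of Abel's formula, showing that $W(t) = W[\mathbf{x}^{(1)}(t),\ldots,\mathbf{x}^{(n)}(t)]$ satisfies a first-order homogeneous $\alpha$-order linear fractal differential equation $D_F^{\alpha} W(t) = \big(\operatorname{tr}\mathbf{P}(t)\big)\, W(t)$, from which the dichotomy follows immediately. First I would compute $D_F^{\alpha} W(t)$ by differentiating the determinant $\det \mathbf{X}(t)$. Using the $F^\alpha$-derivative rules for products and sums (which hold because $F^\alpha$-calculus is conjugate to ordinary calculus, as invoked in the proof of the previous theorem), the derivative of a determinant expands as a sum of $n$ determinants, the $k$-th of which is $\det \mathbf{X}(t)$ with its $k$-th row replaced by its $F^\alpha$-derivative. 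Then I would substitute the governing equations: since each column $\mathbf{x}^{(j)}(t)$ solves \eqref{ijnbhy}, row $k$ of $D_F^\alpha \mathbf{X}(t)$ equals $\sum_{\ell=1}^{n} p_{k\ell}(t)\,(\text{row } \ell \text{ of } \mathbf{X}(t))$. Plugging this into the $k$-th determinant in the expansion and using multilinearity, all terms with $\ell \neq k$ vanish (a determinant with a repeated row), leaving exactly $p_{kk}(t)\det\mathbf{X}(t)$. Summing over $k$ gives $D_F^\alpha W(t) = \big(\sum_k p_{kk}(t)\big) W(t)$.

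Next I would solve this scalar fractal ODE. By the results on $\alpha$-order linear fractal equations (the fractal integrating-factor method, cf. \cite{golmankhaneh2023solving, Alireza-book}), its solution through a base point $t_0 \in F$ is
\begin{equation}\label{abelformula}
W(t) = W(t_0)\,\exp\!\left(\int_{t_0}^{t} \operatorname{tr}\mathbf{P}(s)\, d_F^{\alpha}s\right), \qquad \forall t \in F.
\end{equation}
Since the $F^\alpha$-integral of an $F$-continuous function is finite on $F$ (the integrand $\operatorname{tr}\mathbf{P}(s)=\sum_k p_{kk}(s)$ is $F$-continuous by hypothesis on $\mathbf{P}$), the exponential factor in \eqref{abelformula} is finite and strictly positive at every point of $F$. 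Hence if $W(t_0)=0$ for some $t_0\in F$ then $W\equiv 0$ on $F$, while if $W(t_0)\neq 0$ then $W(t)\neq 0$ for all $t\in F$. This is precisely the claimed dichotomy.

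The main obstacle is the first step: justifying that the classical product rule for differentiating a determinant (equivalently, the multilinear expansion of $D_F^\alpha \det \mathbf{X}(t)$) carries over to the $F^\alpha$-derivative. I would handle this either by appealing directly to the conjugacy between $F^\alpha$-calculus and ordinary calculus — transporting the determinant identity through the staircase function $S_F^\alpha$ — or, more explicitly, by proving the Leibniz rule $D_F^\alpha(fg) = (D_F^\alpha f)g + f(D_F^\alpha g)$ for $F^\alpha$-differentiable $f,g$ and then expanding the determinant by its Laplace (cofactor) expansion and inducting on $n$. A secondary, minor point to state carefully is that the solution formula for the scalar equation $D_F^\alpha W = a(t) W$ with $F$-continuous coefficient $a$ is available from the cited work, so that \eqref{abelformula} is legitimate; everything after that is routine.
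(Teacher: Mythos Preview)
Your proposal is correct and follows essentially the same route as the paper: derive the Abel-type identity $D_F^{\alpha}W=(\operatorname{tr}\mathbf{P})W$, solve it to get $W(t)=c\exp\big(\int \operatorname{tr}\mathbf{P}\,d_F^{\alpha}t\big)$, and read off the dichotomy from the nonvanishing of the exponential. In fact you supply considerably more detail than the paper's own proof, which simply asserts the differential equation \eqref{ijnhy} and its solution \eqref{oool2} without the determinant-differentiation argument or the discussion of the Leibniz rule in $F^{\alpha}$-calculus.
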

\begin{proof}
To prove this theorem, we establish that the Wronskian of $\mathbf{x}^{(1)}(t),\ldots,\mathbf{x}^{(n)}(t)$ satisfies the differential equation:
\begin{equation}\label{ijnhy}
D_{F}^{\alpha}W=(p_{11}(t)+p_{22}(t)+\ldots+p_{nn}(t))W
\end{equation}
Thus, we have:
\begin{equation}\label{oool2}
W(t)=c\exp\left(\int(p_{11}(t)+\ldots+p_{nn}(t))d_{F}^{\alpha}t\right)
\end{equation}
where $c$ is an arbitrary constant. The conclusion of the theorem follow immediately.
\end{proof}

\begin{theorem}
Let
\begin{equation}\label{olpppk}
    \mathbf{e}^{(1)}=\left(
              \begin{array}{c}
                1 \\
                0 \\
                0 \\
                \vdots \\
                0 \\
              \end{array}
            \right)
 ~~~~~\mathbf{e}^{(2)}=\left(
              \begin{array}{c}
                0 \\
                1 \\
                0 \\
                \vdots \\
                0 \\
              \end{array}
            \right),\cdots,\mathbf{e}^{(n)}=\left(
              \begin{array}{c}
                0 \\
                0 \\
                0 \\
                \vdots \\
                1 \\
              \end{array}
            \right).
  \end{equation}
Further, let $\mathbf{x}^{(1)}(t),\ldots,\mathbf{x}^{(n)}(t)$ be the solutions of the following system of fractal differential equation
\begin{equation}\label{tt847wqqqq5}
  D_{F}^{\alpha}\mathbf{x}(t)=\mathbf{P}(t)\mathbf{x}(t),~~~\forall t\in F,
\end{equation}
that satisfy the initial conditions:
\begin{equation}\label{iokjnhy}
\mathbf{x}^{(1)}(t_{0})=\mathbf{e}^{(1)},\ldots,\mathbf{x}^{(n)}
(t_{0})=\mathbf{e}^{(n)},
\end{equation}
where $t_{0}\in F$. Then $\mathbf{x}^{(1)}(t),\ldots,\mathbf{x}^{(n)}(t)$ form a fundamental set of solutions of the system Eq.\eqref{tt847wqqqq5}.
\end{theorem}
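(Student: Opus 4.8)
The plan is to verify directly that the candidate family $\{\mathbf{x}^{(1)}(t),\ldots,\mathbf{x}^{(n)}(t)\}$ meets the hypothesis of the first theorem, namely that its Wronskian is nowhere zero on $F$. By the second theorem, the Wronskian of a collection of solutions of \eqref{tt847wqqqq5} is either identically zero on $F$ or never vanishes, so it suffices to evaluate $W$ at the single point $t_{0}$. First I would form the matrix $\mathbf{X}(t_{0})$ whose columns are $\mathbf{x}^{(1)}(t_{0}),\ldots,\mathbf{x}^{(n)}(t_{0})$; by the initial conditions \eqref{iokjnhy} these columns are exactly $\mathbf{e}^{(1)},\ldots,\mathbf{e}^{(n)}$, so $\mathbf{X}(t_{0})=\mathbf{I}$, the $n\times n$ identity fractal matrix, and hence $W[\mathbf{x}^{(1)}(t),\ldots,\mathbf{x}^{(n)}(t)]\big|_{t=t_{0}}=\det \mathbf{I}=1\neq 0$.

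Next I would invoke the second theorem to upgrade this pointwise nonvanishing to nonvanishing on all of $F$: since $W$ is not identically zero (it equals $1$ at $t_{0}$), it never vanishes on $F$. With that in hand, the first theorem applies verbatim: there exist constants $c_{1},\ldots,c_{n}$ such that every solution $\mathbf{x}(t)$ of \eqref{tt847wqqqq5} can be written as $\mathbf{x}(t)=c_{1}\mathbf{x}^{(1)}(t)+\cdots+c_{n}\mathbf{x}^{(n)}(t)$, which is precisely the statement that $\{\mathbf{x}^{(1)}(t),\ldots,\mathbf{x}^{(n)}(t)\}$ is a fundamental set of solutions on $F$. One should also note in passing that such solutions $\mathbf{x}^{(j)}(t)$ exist and are uniquely determined by the initial data at $t_{0}$; this follows from the conjugacy between $F^{\alpha}$-calculus and ordinary calculus, which transfers the classical existence–uniqueness theory for linear systems with $F$-continuous coefficients to the fractal setting.

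The argument is short and its only genuine content is the reduction via the second theorem; the main obstacle, such as it is, is making sure the hypotheses of that theorem and of the first theorem are cited correctly — in particular that $\mathbf{x}^{(1)},\ldots,\mathbf{x}^{(n)}$ are indeed \emph{solutions} of \eqref{tt847wqqqq5} (given), so that the dichotomy "identically zero or never vanishing" is legitimately available. If one did not want to appeal to the second theorem, an alternative is to argue directly from the conjugacy with classical calculus: the classical result that solutions with linearly independent initial vectors form a fundamental set carries over, and $\mathbf{e}^{(1)},\ldots,\mathbf{e}^{(n)}$ are manifestly linearly independent. Either route is routine; I would present the Wronskian-at-$t_{0}$ computation as the crux and let the two previously established theorems do the rest.
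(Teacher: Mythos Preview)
Your argument is correct but follows a different route from the paper. The paper does not invoke the two preceding theorems at all; instead it establishes linear independence and spanning directly. Linear independence is obtained by evaluating a hypothetical vanishing combination at $t_{0}$ and reading off $c_{1}=\cdots=c_{n}=0$ from $c_{1}\mathbf{e}^{(1)}+\cdots+c_{n}\mathbf{e}^{(n)}=\mathbf{0}$; spanning is argued via uniqueness of solutions to fractal initial value problems. Your approach --- compute $W(t_{0})=\det\mathbf{I}=1$, apply the Wronskian dichotomy of Theorem~2, then conclude via Theorem~1 --- is shorter and exploits the machinery already built, whereas the paper's approach is self-contained and independent of those earlier results. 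Both rest on the same elementary observation that the solution matrix at $t_{0}$ is the identity; you package it as a single Wronskian evaluation, the paper unpacks it into the two defining ingredients of a basis. As a bonus, your route sidesteps a slip in the paper's spanning step, which as written concludes $\mathbf{x}(t)=\mathbf{x}^{(1)}(t)+\cdots+\mathbf{x}^{(n)}(t)$ for an arbitrary solution --- evidently not the intended statement.
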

\begin{proof}
\textbf{Linear Independence:}\\
By contradiction let us suppose that there exist constants $c_1, c_2, \ldots, c_n$ not all zero such that
\begin{equation}\label{iokju9}
  c_1 \mathbf{x}^{(1)}(t) + c_2 \mathbf{x}^{(2)}(t) + \cdots + c_n \mathbf{x}^{(n)}(t) = \mathbf{0},
\end{equation}
for all $t \in F$. Now by evaluating this expression at $t = t_0$, we have that
\begin{equation}\label{iokmnj58}
  c_1 \mathbf{x}^{(1)}(t_0) + c_2 \mathbf{x}^{(2)}(t_0) + \cdots + c_n \mathbf{x}^{(n)}(t_0) = \mathbf{0},
\end{equation}
and by the initial conditions \eqref{iokjnhy}, we get
\begin{equation}\label{ollmmj5}
  c_1 \mathbf{e}^{(1)} + c_2 \mathbf{e}^{(2)} + \cdots + c_n \mathbf{e}^{(n)} = \mathbf{0},
\end{equation}
which implies that $c_1 = c_2 = \cdots = c_n = 0$. \\
\textbf{Spanning the Solution Space:}\\
To show that $\mathbf{x}^{(1)}(t), \ldots, \mathbf{x}^{(n)}(t)$ span the solution space, let us denoted by  $\mathbf{x}(t)$  any solution of the system Eq.\eqref{tt847wqqqq5} with initial conditions \eqref{iokjnhy}, and let us define $\mathbf{y}^{(i)}(t) = \mathbf{x}(t) - \mathbf{x}^{(i)}(t)$ for $i = 1, 2, \ldots, n$. Then $\mathbf{y}^{(i)}(t)$ satisfies the same differential equation as $\mathbf{x}(t)$ and has the initial condition $\mathbf{y}^{(i)}(t_0) = \mathbf{0}$. By the uniqueness of solutions to initial value problems (see \cite{}), we have $\mathbf{y}^{(i)}(t) = \mathbf{0}$ for all $i = 1, 2, \ldots, n$. This implies that $\mathbf{x}(t) = \mathbf{x}^{(1)}(t) + \mathbf{x}^{(2)}(t) + \cdots + \mathbf{x}^{(n)}(t)$. Hence, any solution $\mathbf{x}(t)$ of Eq.\eqref{tt847wqqqq5} with initial conditions \eqref{iokjnhy} can be expressed as a linear combination of $\mathbf{x}^{(1)}(t), \ldots, \mathbf{x}^{(n)}(t)$. Thus, $\mathbf{x}^{(1)}(t), \ldots, \mathbf{x}^{(n)}(t)$ span the solution space.
Since $\mathbf{x}^{(1)}(t), \ldots, \mathbf{x}^{(n)}(t)$ are both linearly independent and span the solution space, they form a fundamental set of solutions of the system Eq.\eqref{tt847wqqqq5}. Thus, the theorem is proved.
\end{proof}

\begin{theorem}
Consider the system of fractal differential equation as:
\begin{equation}\label{oollk}
D_{F}^{\alpha}\mathbf{x}(t)=\mathbf{P}(t)\mathbf{x}(t),
\end{equation}
where each element of $\mathbf{P}$ is a real-valued $F$-continuous function on $F$. If $\mathbf{x}(t)=\mathbf{u}(t)+i\mathbf{v}(t)$ is a complex-valued solution of Eq.\eqref{oollk}, then its real part $\mathbf{u}(t)$ and its imaginary part $\mathbf{v}(t)$ are also solutions of this equation.
\end{theorem}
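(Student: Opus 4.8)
The plan is to exploit the $\mathbb{R}$-linearity of the $F^{\alpha}$-derivative together with the fact that the difference quotient defining $D_{F}^{\alpha}$ has a \emph{real-valued} denominator $S_{F}^{\alpha}(y)-S_{F}^{\alpha}(x)$, so that passing to real and imaginary parts commutes with $D_{F}^{\alpha}$. First I would record the elementary observation that for a complex-valued $F^{\alpha}$-differentiable function $f=g+ih$, with $g,h$ real-valued, one has $D_{F}^{\alpha}f(x)=D_{F}^{\alpha}g(x)+i\,D_{F}^{\alpha}h(x)$. This follows because $\frac{f(y)-f(x)}{S_{F}^{\alpha}(y)-S_{F}^{\alpha}(x)}=\frac{g(y)-g(x)}{S_{F}^{\alpha}(y)-S_{F}^{\alpha}(x)}+i\,\frac{h(y)-h(x)}{S_{F}^{\alpha}(y)-S_{F}^{\alpha}(x)}$, and a complex-valued $F_{-}\mathrm{lim}$ exists if and only if the $F_{-}\mathrm{lim}$ of the real part and of the imaginary part both exist, with the limit splitting accordingly. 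Applying this componentwise to the fractal vector $\mathbf{x}(t)=\mathbf{u}(t)+i\mathbf{v}(t)$ and using the definition \eqref{dsww12} of the fractal derivative of a matrix gives $D_{F}^{\alpha}\mathbf{x}(t)=D_{F}^{\alpha}\mathbf{u}(t)+i\,D_{F}^{\alpha}\mathbf{v}(t)$; in particular $\mathbf{u}$ and $\mathbf{v}$ are themselves $F^{\alpha}$-differentiable on $F$.

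Next I would substitute into Eq.\eqref{oollk}. Since every entry $p_{ij}(t)$ of $\mathbf{P}(t)$ is real-valued, the product decomposes as $\mathbf{P}(t)\mathbf{x}(t)=\mathbf{P}(t)\mathbf{u}(t)+i\,\mathbf{P}(t)\mathbf{v}(t)$, where $\mathbf{P}(t)\mathbf{u}(t)$ and $\mathbf{P}(t)\mathbf{v}(t)$ are real fractal vectors. Hence the solution identity $D_{F}^{\alpha}\mathbf{x}(t)=\mathbf{P}(t)\mathbf{x}(t)$ reads $D_{F}^{\alpha}\mathbf{u}(t)+i\,D_{F}^{\alpha}\mathbf{v}(t)=\mathbf{P}(t)\mathbf{u}(t)+i\,\mathbf{P}(t)\mathbf{v}(t)$ for every $t\in F$. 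Equating real and imaginary parts componentwise yields $D_{F}^{\alpha}\mathbf{u}(t)=\mathbf{P}(t)\mathbf{u}(t)$ and $D_{F}^{\alpha}\mathbf{v}(t)=\mathbf{P}(t)\mathbf{v}(t)$ for all $t\in F$, which is exactly the assertion that $\mathbf{u}$ and $\mathbf{v}$ solve Eq.\eqref{oollk}.

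The only point that genuinely needs care — the main obstacle — is the first step: justifying that the $F_{-}\mathrm{lim}$, and hence $D_{F}^{\alpha}$, commutes with the real/imaginary decomposition. One should verify, directly from the $\varepsilon$--$\delta$ formulation of the fractal limit, that a complex-valued $\phi$ has $F_{-}\mathrm{lim}_{y\to x}\phi(y)=L$ if and only if the real and imaginary parts of $\phi$ tend to $\mathrm{Re}\,L$ and $\mathrm{Im}\,L$; this is immediate from the inequalities $|\mathrm{Re}\,z|,\,|\mathrm{Im}\,z|\le |z|\le |\mathrm{Re}\,z|+|\mathrm{Im}\,z|$. Once this lemma is in place, the remainder is a one-line linear-algebra computation. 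I would therefore state this lemma explicitly, or invoke the conjugacy of $F^{\alpha}$-calculus with ordinary calculus as in \cite{parvate2009calculus,parvate2011calculus,Alireza-book}, which transports the corresponding classical fact, and then close the argument as above.
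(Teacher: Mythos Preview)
Your proof is correct and follows essentially the same route as the paper: substitute $\mathbf{x}=\mathbf{u}+i\mathbf{v}$, split $D_{F}^{\alpha}\mathbf{x}-\mathbf{P}\mathbf{x}$ into real and imaginary parts using the realness of $\mathbf{P}$, and equate each to zero. You are simply more careful than the paper in justifying the identity $D_{F}^{\alpha}(\mathbf{u}+i\mathbf{v})=D_{F}^{\alpha}\mathbf{u}+i\,D_{F}^{\alpha}\mathbf{v}$, which the paper uses without comment.
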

\begin{proof}
To prove this theorem, we substitute $\mathbf{u}(t)+ i\mathbf{v}(t)$ for $\mathbf{x}(t)$ in Eq.\eqref{oollk}, thereby obtaining
\begin{equation}\label{iombx}
D_{F}^{\alpha}\mathbf{x}(t)-\mathbf{P}(t)\mathbf{x}(t)=
D_{F}^{\alpha}\mathbf{u}(t)-\mathbf{P}(t)\mathbf{u}(t)
+i( D_{F}^{\alpha}\mathbf{v}(t)-\mathbf{P}(t)\mathbf{v}(t))=0
\end{equation}
Thus we have
\begin{equation}\label{iooo47}
D_{F}^{\alpha}\mathbf{u}(t)-\mathbf{P}(t)\mathbf{u}(t)=0,~~~\text{and}~~~
D_{F}^{\alpha}\mathbf{v}(t)-\mathbf{P}(t)\mathbf{v}(t)=0,
\end{equation}
Therefore, $\mathbf{u}(t)$ and $\mathbf{v}(t)$ are solutions of Eq.\eqref{oollk}.
\end{proof}

\section{Homogeneous System of $\alpha$-Order  Linear Fractal Differential Equation with Constant Coefficients \label{3g}}
A homogeneous fractal linear system with constant coefficients is a system of linear differential equations in which each element of the fractal matrix is constant. Mathematically, such a system can be represented as:
\begin{equation}\label{33oooplmj}
D_{F}^{\alpha}\mathbf{x}(t)=\mathbf{A} \mathbf{x}(t),
\end{equation}
where $\mathbf{A}$ is a $n\times n$ constant  matrix. By  \cite{golmankhaneh2023solving} it follows that to solve Eq.\eqref{33oooplmj}, we consider a solution of the form
\begin{equation}\label{eeeq12}
\mathbf{x}(t)=\boldsymbol{\xi}\exp(rS_{F}^{\alpha}(t)),
\end{equation}
where $r$ and $\boldsymbol{\xi}$ are to be determined. Substituting Eq.\eqref{eeeq12} into Eq.\eqref{33oooplmj}, we obtain
\begin{equation}\label{rwqaqz891}
r \boldsymbol{\xi}\exp(rS_{F}^{\alpha}(t))=
\mathbf{A}\boldsymbol{\xi}\exp(rS_{F}^{\alpha}(t)),
\end{equation}
or equivalently,
\begin{equation}\label{io18}
(\mathbf{A}-r\mathbf{I})\boldsymbol{\xi}=0,
\end{equation}
where $\mathbf{I}$ is the $n\times n$ identity matrix therefore, to find solutions of Eq.\eqref{33oooplmj}, we solve the system of algebraic equations given by Eq.\eqref{io18}. The eigenvalues $r_{1},...,r_{n}$ are the roots of the $n^{th}$ degree polynomial equation
\begin{equation}\label{dddddq}
\det(\mathbf{A}-r\mathbf{I})=0.
\end{equation}
The corresponding eigenvectors to eigenvalues $r_{1},...,r_{n}$ provide the general solution of the fractal system given by Eq.\eqref{33oooplmj}.
\begin{example}
Consider the fractal system of differential equations given by:
\begin{equation}\label{frreq1}
D_{F}^{\alpha}\mathbf{x}(t)=\left(
\begin{array}{cc}
1 & 1 \\
4 & 1 \\
\end{array}
\right)\mathbf{x}(t).
\end{equation}
To find the solution explicitly, we assume that $\mathbf{x}(t)= \boldsymbol{\xi}\exp(rS_{F}^{\alpha}(t))$ and substitute it into Eq.\eqref{frreq1}, we have:
\begin{equation}\label{ree8569}
\left(
\begin{array}{cc}
1-r & 1 \\
4 & 1-r \\
\end{array}
\right)\left(
\begin{array}{c}
\xi_{1} \\
\xi_{2} \\
\end{array}
\right)=\left(
\begin{array}{c}
0 \\
0 \\
\end{array}
\right).
\end{equation}
This equation leads to the characteristic equation:
\begin{equation}\label{io4875}
\begin{vmatrix}
1-r & 1 \\
4 & 1-r
\end{vmatrix}=(r-3)(r+1)=0,
\end{equation}
which gives the eigenvalues $r_{1}=3$ and $r_{2}=-1$. These eigenvalues are also the roots of the characteristic polynomial. Substituting $r=3$ and $r=-1$ into Eq.\eqref{ree8569}, we obtain the eigenvectors:
\begin{equation}\label{ewaqsdcxd}
\boldsymbol{\xi}^{(1)}(t)=\left(
\begin{array}{c}
1 \\
2 \\
\end{array}
\right),~~~~~\boldsymbol{\xi}^{(2)}=\left(
\begin{array}{c}
1 \\
-2 \\
\end{array}
\right).
\end{equation}

Thus, the corresponding solutions of Eq.\eqref{frreq1} are:
\begin{equation}\label{ttyy744}
\mathbf{x}^{(1)}(t)=\left(
\begin{array}{c}
1 \\
2 \\
\end{array}
\right)\exp(3S_{F}^{\alpha}(t)),~~~~\mathbf{x}^{(1)}=\left(
\begin{array}{c}
1 \\
-2 \\
\end{array}
\right)\exp(-S_{F}^{\alpha}(t)).
\end{equation}

The Wronskian of these solutions is:
\begin{equation}\label{ewws123}
W[\mathbf{x}^{(1)}(t),\mathbf{x}^{(2)(t)}]=\begin{vmatrix}
\exp(3S_{F}^{\alpha}(t)) & \exp(-S_{F}^{\alpha}(t)) \\
2\exp(3S_{F}^{\alpha}(t)) & -2\exp(-S_{F}^{\alpha}(t))
\end{vmatrix}=-4\exp(2S_{F}^{\alpha}(t)).
\end{equation}
This Wronskian is not zero, indicating that the solutions $\mathbf{x}^{(1)}(t)$ and $\mathbf{x}^{(2)}(t)$ form a fundamental set.

Therefore, the general solution of the system given by Eq.\eqref{frreq1} is:
\begin{equation}\label{erewwsdfr}
\begin{split}
\mathbf{x}(t)&=c_{1}\mathbf{x}^{1}(t)+c_{2}\mathbf{x}^{2}(t)\\&=
c_{1}\left(
\begin{array}{c}
1 \\
2 \\
\end{array}
\right)\exp(3S_{F}^{\alpha}(t))+c_{2}\left(
\begin{array}{c}
1 \\
-2 \\
\end{array}
\right)\exp(-S_{F}^{\alpha}(t))
\end{split}
\end{equation}
where $c_{1}$ and $c_{2}$ are arbitrary constants. Setting $c_{2}=0$ results in:
\begin{equation}\label{ijnjk}
\mathbf{x}=c_{1}\mathbf{x}^{1}(t),
\end{equation}
or in scalar form:
\begin{equation}\label{iokmnj}
x_{1}(t)=c_{1}\exp(3S_{F}^{\alpha}(t)),~~~~x_{2}(t)=2c_{1}\exp(3S_{F}^{\alpha}(t)).
\end{equation}
Choosing $c_{1}=0$, yields:
\begin{equation}\label{ookijnjk}
\mathbf{x}(t)=c_{2}\mathbf{x}^{2}(t),
\end{equation}
or:
\begin{equation}\label{iokmnj66}
x_{1}(t)=c_{2}\exp(-S_{F}^{\alpha}(t)),~~~~x_{2}(t)=-2c_{2}\exp(-S_{F}^{\alpha}(t)).
\end{equation}
\begin{figure}[ht]
  \centering
  \includegraphics[scale=0.5]{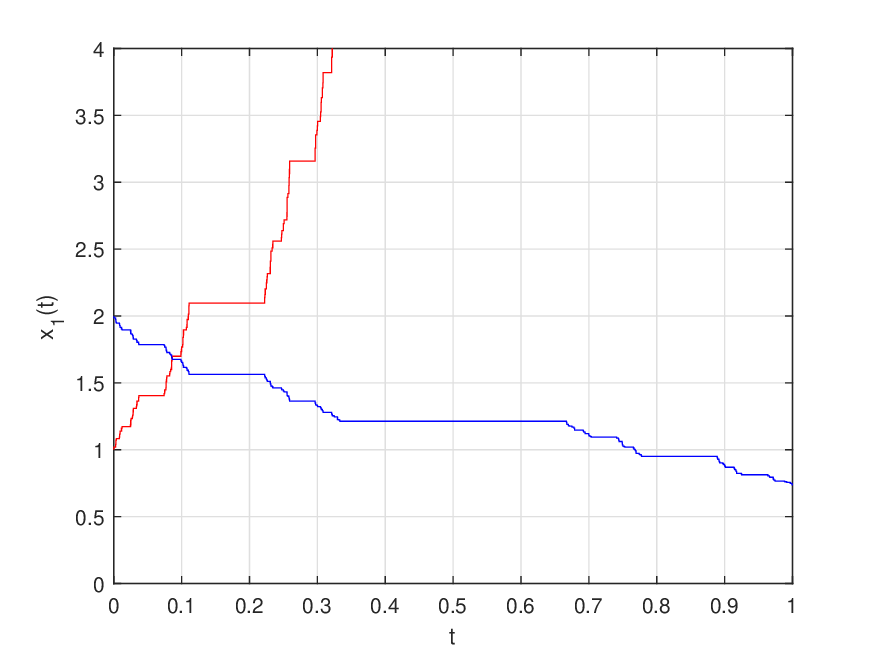}
  \caption{We have plotted $x_{1}$ for $c_{1}=1$ (red) and $c_{2}=2$ (blue). }\label{ddsss-1}
\end{figure}
\begin{figure}[ht]
  \centering
  \includegraphics[scale=0.5]{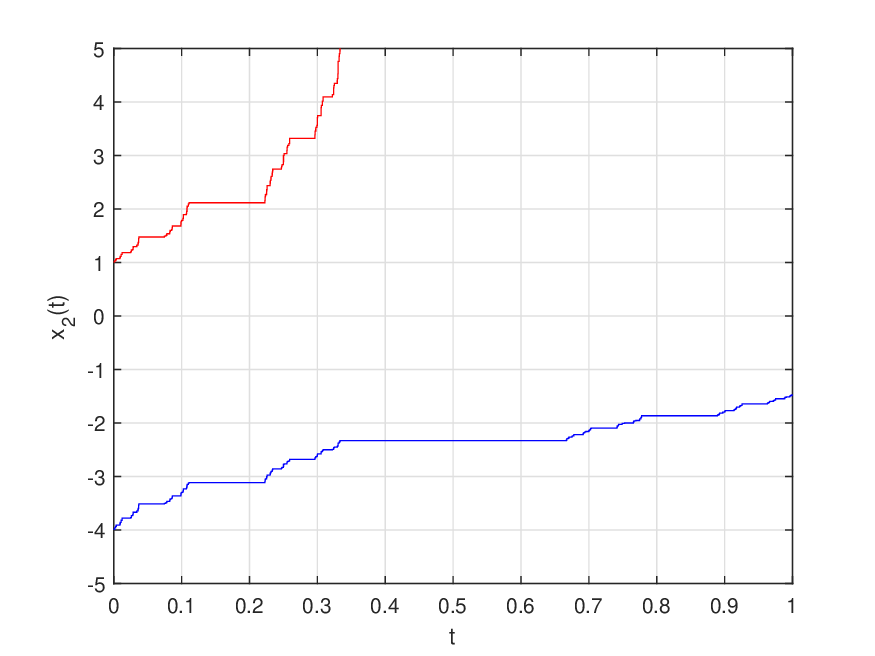}
  \caption{We have plotted $x_{2}(t)$ for $c_{1}=1/2$ (red) and $c_{2}=2$ (blue). }\label{ddsss-2}
\end{figure}
In Figures \ref{ddsss-1} and \ref{ddsss-2}, we have visualized Eqs.\eqref{iokmnj} and \eqref{iokmnj66} for various value of the constants $c_{1}$ and $c_{2}$.
\end{example}

\begin{example}
  Consider the system of fractal differential equations given by:
\begin{equation}\label{rteew}
D_{F}^{\alpha}\mathbf{x}(t)=\left(
\begin{array}{cc}
-3 & \sqrt{2} \\
\sqrt{2} & -2 \\
\end{array}
\right)\mathbf{x}(t).
\end{equation}
To find the solution, we assume $\mathbf{x}(t)=\boldsymbol{\xi}\exp(rS_{F}^{\alpha}(t))$. Substituting this into Eq.\eqref{rteew}, we obtain the algebraic system:
\begin{equation}\label{ree}
\left(
\begin{array}{cc}
-3-r & \sqrt{2} \\
\sqrt{2} & -2-r \\
\end{array}
\right)\left(
\begin{array}{c}
\xi_{1} \\
\xi_{2} \\
\end{array}
\right)=\left(
\begin{array}{c}
0 \\
0 \\
\end{array}
\right).
\end{equation}

So, the eigenvalues and eigenvectors are:
\begin{equation}\label{iiiip}
r_{1}=-1,~~~~\boldsymbol{\xi}^{1}=\left(
\begin{array}{c}
1 \\
\sqrt{2} \\
\end{array}
\right),~~~~r_{2}=-4,~~~~\boldsymbol{\xi}^{2}=\left(
\begin{array}{c}
-\sqrt{2} \\
1 \\
\end{array}
\right).
\end{equation}

Thus, the general solution is:
\begin{equation}\label{iijjj852}
\begin{split}
\mathbf{x}(t)&=c_{1}\mathbf{x}^{1}(t)+c_{2}\mathbf{x}^{2}(t)\\&=
c_{1}\left(
\begin{array}{c}
1 \\
\sqrt{2}\\
\end{array}
\right)\exp(-S_{F}^{\alpha}(t))+c_{2}\left(
\begin{array}{c}
-\sqrt{2} \\
1\\
\end{array}
\right)\exp(-4S_{F}^{\alpha}(t)).
\end{split}
\end{equation}
or if $c_{2}=0 $
\begin{equation}\label{re12eeawq}
  x_{1}(t)=c_{1}\exp(-S_{F}^{\alpha}(t)),~~~~~~~~~
  x_{2}(t)=\sqrt{2}~c_{1}\exp(-S_{F}^{\alpha}(t))
  \end{equation}
    and if $c_{1}=0$
    \begin{equation}\label{32re12eeawq}
 x_{1}(t)=-\sqrt{2}~c_{2}\exp(-4S_{F}^{\alpha}(t)),~~~~~
  x_{2}(t)=c_{2}\exp(-4S_{F}^{\alpha}(t))
\end{equation}
\begin{figure}[ht]
  \centering
  \includegraphics[scale=0.5]{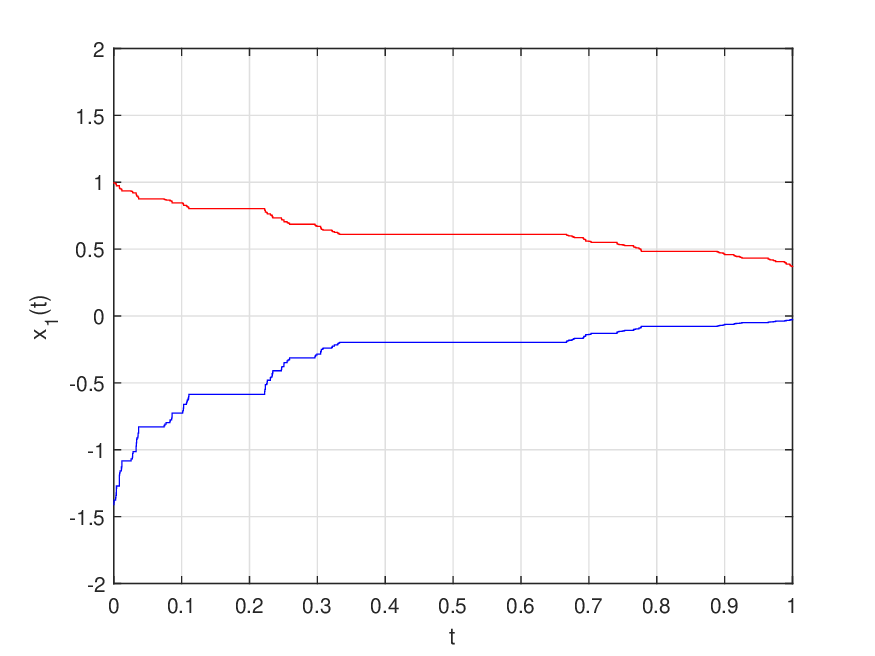}
  \caption{Graph of $x_{1}(t)$ for $c_{1}=1$ and $c_{2}=1$}\label{re588}
\end{figure}

\begin{figure}[ht]
  \centering
 \includegraphics[scale=0.5]{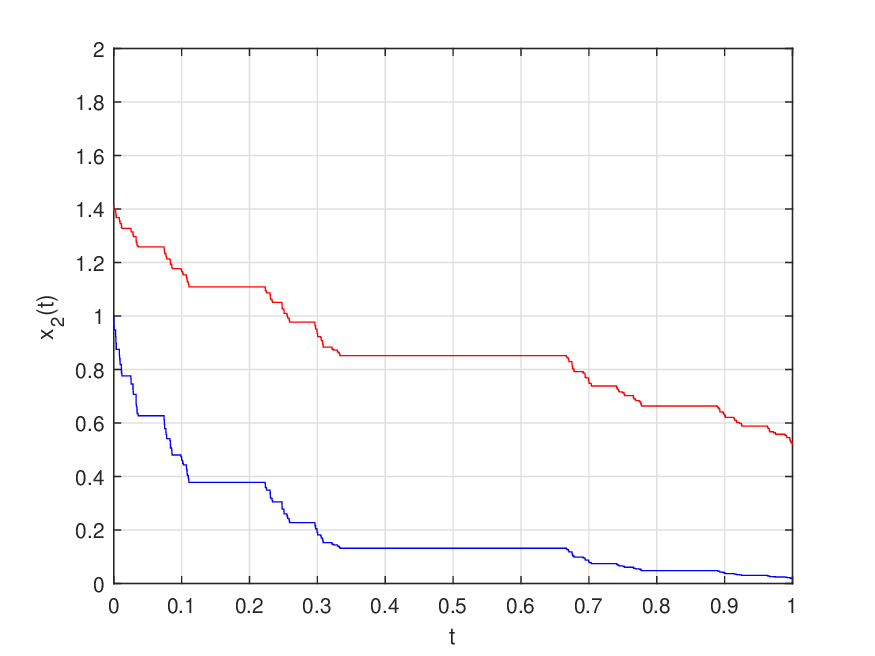}
  \caption{Graph of $x_{2}(t)$ for $c_{1}=1$ and $c_{2}=1$}\label{588}
\end{figure}
In Figures \ref{re588} and \ref{588}, we have plotted  Eq.\eqref{re12eeawq} and Eq.\eqref{32re12eeawq} for different values of $c_{1}$ and $c_{2}$.

\end{example}

\begin{example}
Consider the system of fractal differential equations given by:
\begin{equation}\label{i9uyhbgt}
D_{F}^{\alpha}\mathbf{x}(t)=\left(
\begin{array}{cc}
-\frac{1}{2} & 1 \\
-1 & -\frac{1}{2} \\
\end{array}
\right)\mathbf{x}.
\end{equation}

To find the solution, we assume $\mathbf{x}(t)=\boldsymbol{\xi} \exp(r S_{F}^{\alpha}(t))$. Substituting this into Eq.\eqref{i9uyhbgt} we have:
\begin{equation}\label{iouhyjpkmu}
\left(
\begin{array}{cc}
-\frac{1}{2}-r & 1 \\
-1 & -\frac{1}{2}-r \\
\end{array}
\right)
\left(
\begin{array}{c}
\xi_{1} \\
\xi_{2} \\
\end{array}
\right)=
\left(
\begin{array}{c}
0 \\
0\\
\end{array}
\right).
\end{equation}

Then, the eigenvalues and eigenvectors are:
\begin{equation}\label{ewwsw1}
r_{1}=-\frac{1}{2}+i,~~~\boldsymbol{\xi}^{1}=\left(
\begin{array}{c}
1 \\
i \\
\end{array}
\right),~~~
r_{2}=-\frac{1}{2}-i,~~~\boldsymbol{\xi}^{2}=\left(
\begin{array}{c}
1 \\
-i \\
\end{array}
\right),
\end{equation}

and the general solution is:
\begin{equation}\label{yy45}
\begin{split}
\mathbf{x}(t)&=c_{1}\mathbf{x}^{1}(t)+c_{2}\mathbf{x}^{2}(t)\\&=
c_{1}\left(\begin{array}{c}
1 \\
i \\
\end{array}
\right)\exp((-\frac{1}{2}+i)S_{F}^{\alpha}(t))+
c_{2}\left(\begin{array}{c}
1 \\
-i \\
\end{array}
\right)\exp((-\frac{1}{2}-i)S_{F}^{\alpha}(t)).
\end{split}
\end{equation}

We can express the real-valued solution as:
\begin{equation}\label{yttt23}
\begin{split}
\mathbf{u}(t)&=\exp\left(-\frac{S_{F}^{\alpha}(t)}{2}\right)\left(
\begin{array}{c}
\cos(S_{F}^{\alpha}(t)) \\
-\sin(S_{F}^{\alpha}(t)) \\
\end{array}
\right),\\
\mathbf{v}(t)&=\exp\left(-\frac{S_{F}^{\alpha}(t)}{2}\right)\left(
\begin{array}{c}
\sin(S_{F}^{\alpha}(t)) \\
\cos(S_{F}^{\alpha}(t)) \\
\end{array}
\right).
\end{split}
\end{equation}
Then the general solution is
\begin{equation}\label{ees23}
  \mathbf{x}(t)=c_{1}\mathbf{u}(t)+c_{2}\mathbf{v}(t)
\end{equation}
or if $c_{1}=0$
\begin{equation}\label{t544aqwrr}
  x_{1}(t)=c_{2}\exp\left(-\frac{S_{F}^{\alpha}(t)}{2}\right)\sin(S_{F}^{\alpha}(t)),~~~
  x_{2}(t)=c_{2}\exp\left(-\frac{S_{F}^{\alpha}(t)}{2}\right)\cos(S_{F}^{\alpha}(t))
\end{equation}
or if $c_{2}=0$
\begin{equation}\label{twsaq544aqwrr}
  x_{1}(t)=c_{1}\exp\left(-\frac{S_{F}^{\alpha}(t)}{2}\right)\cos(S_{F}^{\alpha}(t)),~~~
  x_{2}(t)=-c_{1}\exp\left(-\frac{S_{F}^{\alpha}(t)}{2}\right)\sin(S_{F}^{\alpha}(t))
\end{equation}
Their Wronskian is:
\begin{equation}\label{er}
 \begin{split}
   W[\mathbf{u},\mathbf{v}](t)&=\begin{vmatrix}
  \exp\left(-\frac{S_{F}^{\alpha}(t)}{2}\right)\cos(S_{F}^{\alpha}(t)) & \exp\left(-\frac{S_{F}^{\alpha}(t)}{2}\right)\sin(S_{F}^{\alpha}(t))\\
   -\exp\left(-\frac{S_{F}^{\alpha}(t)}{2}\right)\sin(S_{F}^{\alpha}(t)) & \exp\left(-\frac{S_{F}^{\alpha}(t)}{2}\right)\cos(S_{F}^{\alpha}(t))\\
   \end{vmatrix}\\&=\exp\left(-S_{F}^{\alpha}(t)\right)
   \end{split}
 \end{equation}
which is never zero. Thus, $\mathbf{u}(t)$ and $\mathbf{v}(t)$ are fundamental solutions of Eq.\eqref{i9uyhbgt}.

\begin{figure}[ht]
  \centering
  \includegraphics[scale=0.5]{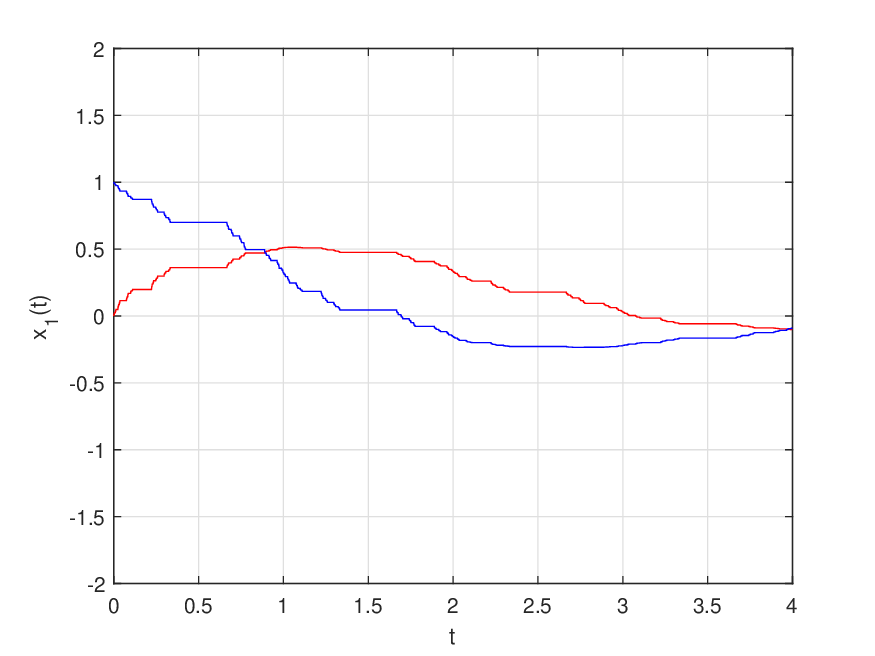}
  \caption{Graph of $x_{1}$ for different values of $c_{1}$ and $c_{2}$.}\label{ewrtfdcd}
\end{figure}

\begin{figure}[ht]
  \centering
  \includegraphics[scale=0.5]{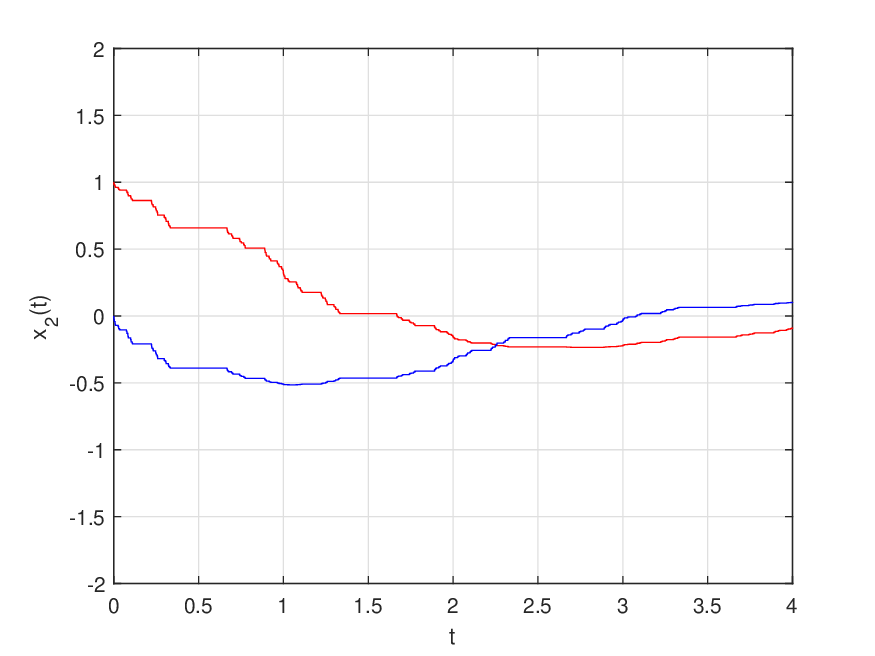}
  \caption{Graph of $x_{2}$ for different values of $c_{1}$ and $c_{2}$.}\label{ewrtfdqqcd}
\end{figure}

Graphs depicting the behavior of $x_{1}$ and $x_{2}$ in Eqs.\eqref{t544aqwrr} and \eqref{twsaq544aqwrr} with respect to $t$ are illustrated in Figures \ref{ewrtfdcd} and \ref{ewrtfdqqcd}.
\end{example}

\begin{example}
Consider the system of fractal differential equations given by:
\begin{equation}\label{eee122qq3aw}
D_{F}^{\alpha}\mathbf{x}(t)=\left(
\begin{array}{cc}
1 & -1 \\
1 & 3 \\
\end{array}
\right)\mathbf{x}(t).
\end{equation}
To solve Eq.\eqref{eee122qq3aw}, let $\mathbf{x}(t)=\boldsymbol{\xi}\exp(rS_{F}^{\alpha}(t))$. Substituting this into Eq.\eqref{eee122qq3aw}, we find the double eigenvalue:
\begin{equation}\label{reeeqaw}
r_{1}=r_{2}=2,~~~~\boldsymbol{\xi}=\left(\begin{array}{c}
1 \\
-1 \\
\end{array}
\right)
\end{equation}
One independent solution is then:
\begin{equation}\label{tee}
\mathbf{x}^{(1)}(t)=\left(\begin{array}{c}
1 \\
-1 \\
\end{array}
\right)\exp(2S_{F}^{\alpha}(t))
\end{equation}
To find the second independent solution, we assume:
\begin{equation}\label{reeeqsaw}
\mathbf{x}=\boldsymbol{\xi} S_{F}^{\alpha}(t)\exp(2S_{F}^{\alpha}(t))+
\boldsymbol{\eta}\exp(2S_{F}^{\alpha}(t))
\end{equation}
where $\boldsymbol{\xi}$ and $\boldsymbol{\eta}$ are constant vectors to be determined. Substituting Eq.\eqref{reeeqsaw} into Eq.\eqref{eee122qq3aw}, we get:
\begin{equation}\label{iokmju}
(\mathbf{A}-2\mathbf{I})\boldsymbol{\xi}=0,~~~~
(\mathbf{A}-2\mathbf{I})\boldsymbol{\eta}=\boldsymbol{\xi}
\end{equation}
By solving Eq.\eqref{iokmju}, we obtain:
\begin{equation}\label{eerq23}
r_{1}=r_{2}=2,~~~~\boldsymbol{\xi}=\left(
\begin{array}{c}
1 \\
-1 \\
\end{array}
\right)
\end{equation}
and
\begin{equation}\label{iookmju85}
\boldsymbol{\eta}=\left(
\begin{array}{c}
0 \\
-1 \\
\end{array}
\right)+k \left(
\begin{array}{c}
1 \\
-1 \\
\end{array}
\right)
\end{equation}
where $k$ is an arbitrary constant. Substituting $\boldsymbol{\xi}$ and $\boldsymbol{\eta}$ into Eq.\eqref{reeeqsaw}, we find the second independent solution:
\begin{equation}\label{ioomkjnu}
\mathbf{x}^{(2)}(t)=\left(
\begin{array}{c}
1 \\
-1 \\
\end{array}
\right)S_{F}^{\alpha}(t)\exp(2S_{F}^{\alpha}(t))+
\left(
\begin{array}{c}
0 \\
-1 \\
\end{array}
\right)\exp(2S_{F}^{\alpha}(t))
\end{equation}
The Wronskian of these solutions is:
\begin{equation}\label{reeew}
W[\mathbf{x}^{(1)}(t),\mathbf{x}^{(2)}(t)]=-\exp(4S_{F}^{\alpha}(t))
\end{equation}
Thus, $\mathbf{x}^{(1)}(t)$ and $\mathbf{x}^{(2)}(t)$ form a fundamental set of solutions. The general solution of Eq.\eqref{eee122qq3aw} is then given by:
\begin{equation}\label{ijnhyuhb}
\begin{split}
\mathbf{x}(t)&=c_{1}\mathbf{x}^{(1)}(t)+c_{2}\mathbf{x}^{(2)}(t)\\&=
c_{1}\left(
\begin{array}{c}
1 \\
-1 \\
\end{array}
\right)\exp(2S_{F}^{\alpha}(t))+c_{2}\bigg[\left(
\begin{array}{c}
1 \\
-1 \\
\end{array}
\right)
S_{F}^{\alpha}(t)\exp(2S_{F}^{\alpha}(t))\\&+
\left(
\begin{array}{c}
0 \\
-1 \\
\end{array}
\right)\exp(2S_{F}^{\alpha}(t))\bigg]
\end{split}
\end{equation}
or if $c_{1}=0$ we have
\begin{equation}\label{77747m}
  x_{1}(t)=c_{2}S_{F}^{\alpha}(t)\exp(2S_{F}^{\alpha}(t)),~~~
  x_{2}(t)=-c_{2}(S_{F}^{\alpha}(t)+1)\exp(2S_{F}^{\alpha}(t))
\end{equation}
if $c_{2}=0$ we have
\begin{equation}\label{7yhbgt7747m}
  x_{1}(t)=c_{1}\exp(2S_{F}^{\alpha}(t)),~~~
  x_{2}(t)=-c_{1}\exp(2S_{F}^{\alpha}(t))
\end{equation}

\begin{figure}[ht]
  \centering
  \includegraphics[scale=0.5]{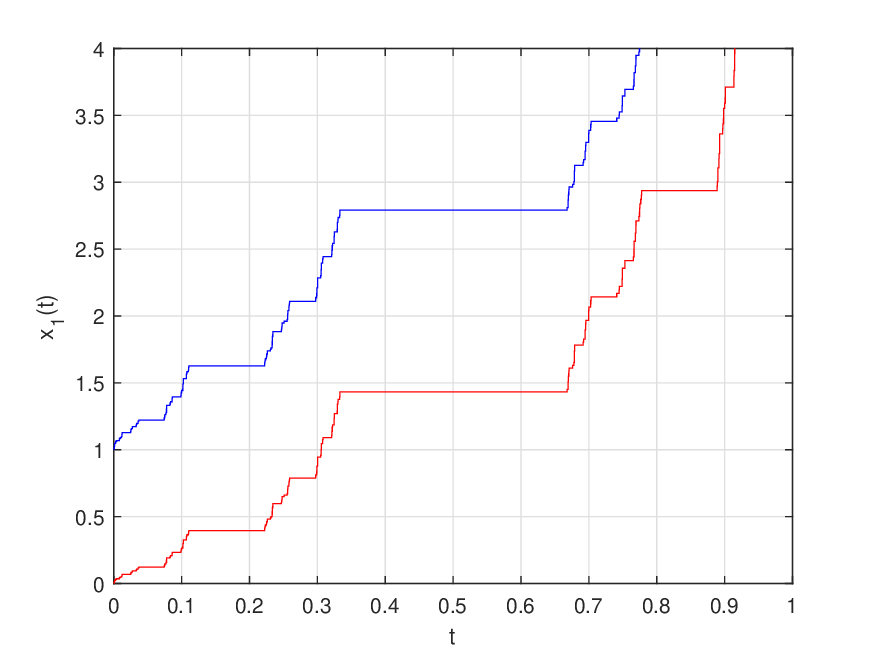}
  \caption{Graph of $x_{1}$ for different values of $c_{1}=1$ and $c_{2}=1$.}\label{rrftre}
\end{figure}

\begin{figure}[ht]
  \centering
  \includegraphics[scale=0.5]{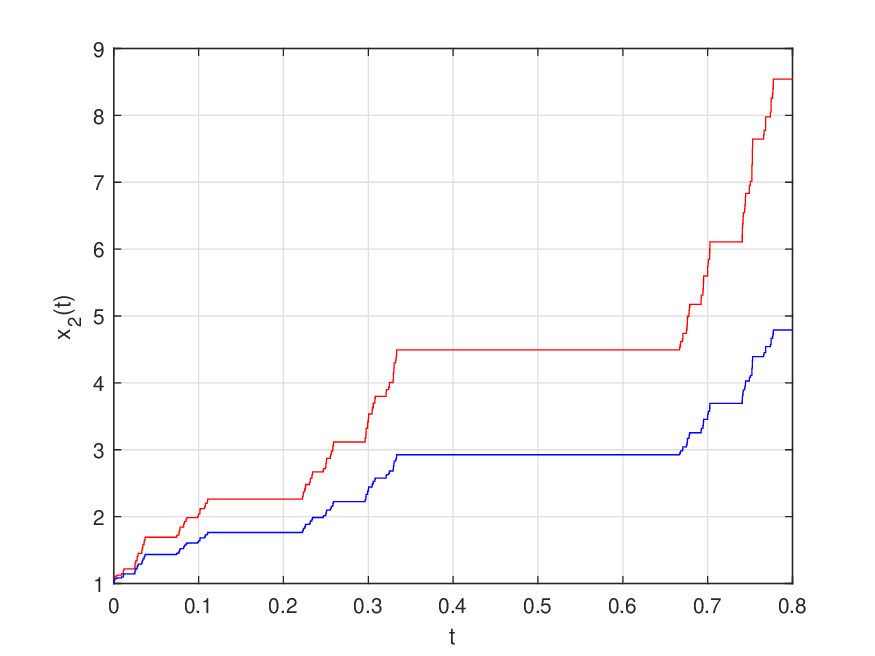}
  \caption{Graph of $x_{2}$ for different values of $c_{1}=-1$ and $c_{2}=-1$.}\label{rrfcxrftre}
\end{figure}

Graphs of $x_{1}$ and $x_{2}$ in Eqs.\eqref{77747m} and \eqref{7yhbgt7747m} are shown in Figures \ref{rrftre} and \ref{rrfcxrftre}.
\end{example}

\begin{example}
Consider the system of fractal differential equations represented as:
\begin{equation}\label{r4ttts}
D_{F}^{\alpha}\mathbf{x}(t)=\left(
\begin{array}{cc}
2 & 0 \\
0 & -3 \\
\end{array}
\right)
\mathbf{x}(t).
\end{equation}

Expressing Eq.\eqref{r4ttts} in the scalar form, we obtain that:
\begin{equation}\label{ewww1221}
D_{F}^{\alpha}x_{1}(t)=2x_{1}(t), \quad D_{F}^{\alpha}x_{2}(t)=2x_{2}(t).
\end{equation}

The solution to Eq.\eqref{ewww1221} is:
\begin{equation}\label{erwwsw}
\mathbf{x}(t)=\left(
\begin{array}{c}
c_{1}\exp(2S_{F}^{\alpha}(t)) \\
c_{2}\exp(-3S_{F}^{\alpha}(t)) \\
\end{array}
\right)=c_{1}\left(
\begin{array}{c}
\exp(2S_{F}^{\alpha}(t)) \\
0 \\
\end{array}
\right)+c_{2}\left(
\begin{array}{c}
\exp(-3S_{F}^{\alpha}(t)) \\
0 \\
\end{array}
\right).
\end{equation}

Alternatively, we can express it as:
\begin{equation}\label{yy7777}
\mathbf{x}^{1}(t)=\left(
\begin{array}{c}
1 \\
0 \\
\end{array}
\right)\exp(2S_{F}^{\alpha}(t)), \quad \mathbf{x}^{2}(t)=\left(
\begin{array}{c}
0 \\
1 \\
\end{array}
\right)\exp(-3S_{F}^{\alpha}(t)).
\end{equation}

The Wronskian of these two solutions is:
\begin{equation}\label{tt55433}
W[\mathbf{x}^{1}(t),\mathbf{x}^{2}(t)]=\begin{vmatrix}
\exp(2S_{F}^{\alpha}(t)) & 0 \\
0 & \exp(-3S_{F}^{\alpha}(t)) \\
\end{vmatrix}=\exp(-S_{F}^{\alpha}(t)),
\end{equation}
which is never zero. Hence, $\mathbf{x}^{1}(t)$ and $\mathbf{x}^{2}(t)$ constitute a fundamental set of solutions.
\end{example}
%\section{Conclusion \label{5g}}
%In this paper, we have introduced $n^th$ $\alpha$-order fractal homogenous and nonhomogeneous fractal differential equations and solved.
%Higher or $n^th$ $\alpha$-order fractal homogenous with constant coefficients have been solved. The Method of Variation of Parameters and  The Method of Undetermined Coefficients have been used to find specific solution of  higher nonhomogeneous fractal differential equations. The equations of motion governing the behavior of two masses in fractal time are given and solved their solution have been plotted.

\section{Conclusion \label{4g}}
In this paper we have introduced the notion of a fractal matrix and its corresponding homogeneous system of fractal differential equations,
 through the use of $F^{\alpha}$-calculus,  introduced by Parvate and Gangal  \cite{parvate2009calculus} and subsequently studied by  Khalili Golmankhaneh  \cite{Alireza-book}. As it is well known the systems of linear differential equations are of great importance in literature, both because many physical phenomena can be modeled by equations of this type and because even non-linear phenomena can, in a first approximation, be described by linear equations. Therefore, motivated by this fact, we dealt with the detailed study of the set of solutions of homogeneous system of $\alpha$-order linear differential equations: $D^{\alpha}_F\bold{x}(t)=\bold{P}(t)\bold{x}(t),$  having non-integer orders of differentiation in the system. Moreover,  guided by the fact that,  the solutions to such systems often exhibit complex and self-similar patterns over different scales, akin to fractal geometries,  we have explicitly represented the solutions of homogeneous system of $\alpha$-order linear differential equations with constant coefficients: $D^{\alpha}_F\bold{x}(t)=\bold{A}\bold{x}(t),$ illustrating everything with significant examples.\\
\textbf{Declaration of Competing Interest:}\\
The authors declare that they have no known competing financial interests or personal relationships that could have appeared to influence the work reported in this paper.\\
%\textbf{CRediT author statement:}\\
%Alireza.K.Golmankhnaeh : Investigation, Methodology, Software, Writing- Original draft preparation.\\
% Donatella Bongiorno : Investigation, Writing- Reviewing and Editing.\\
%\textbf{Declaration of generative AI and AI-assisted technologies in the writing process.}\\
%During the preparation of this work the authors used GPT in order to correct grammar and writing. After using this GPT, the authors reviewed and edited the content as needed and takes full responsibility for the content of the publication.

\section{References}
\bibliographystyle{elsarticle-num}
%\biboptions{sort&compress}
\bibliography{Refrancesmasystem1}

%% else use the following coding to input the bibitems directly in the
%% TeX file.

%%\begin{thebibliography}{00}

%% \bibitem[Author(year)]{label}
%% For example:

%% \bibitem[Aladro et al.(2015)]{Aladro15} Aladro, R., Martín, S., Riquelme, D., et al. 2015, \aas, 579, A101

%%\end{thebibliography}

\end{document}